
\documentclass{article}

\usepackage{times}
\usepackage{graphicx} 
\usepackage{subfigure} 
\usepackage{amsmath, amssymb, math}
\usepackage{natbib}

\usepackage{algorithm}
\usepackage{algorithmic}

\usepackage{hyperref}


\newtheorem{lemm}{Proposition S\hspace{-2.5pt}}

\usepackage[accepted]{icml2017}

\icmltitlerunning{Dissipativity Theory for Nesterov's Accelerated Method}

\hyphenation{sub-linear remark}

\begin{document} 

\twocolumn[
\icmltitle{Dissipativity Theory for Nesterov's Accelerated Method}




\begin{icmlauthorlist}
\icmlauthor{Bin Hu}{goo}
\icmlauthor{Laurent Lessard}{goo}
\end{icmlauthorlist}
\icmlaffiliation{goo}{University of Wisconsin--Madison, Madison, WI 53706, USA}

\icmlcorrespondingauthor{Bin Hu}{bhu38@wisc.edu}

\icmlkeywords{Nesterov's accelerated method, dissipativity theory, Lyapunov theory}

\vskip 0.3in
]



\printAffiliationsAndNotice{}  

\begin{abstract} 
In this paper, we adapt the control theoretic concept of dissipativity theory to provide a natural understanding of Nesterov's accelerated method. Our theory ties rigorous convergence rate analysis to the physically intuitive notion of energy dissipation. Moreover, dissipativity allows one to efficiently construct Lyapunov functions (either numerically or analytically) by solving a small semidefinite program. Using novel supply rate functions, we show how to recover known rate bounds for Nesterov's method and we generalize the approach to certify both linear and sublinear rates in a variety of settings. Finally, we  link the continuous-time version of dissipativity to recent works on algorithm analysis that use discretizations of ordinary differential equations.
\end{abstract} 

\section{Introduction}
Nesterov's accelerated method~\citep{YEN03a} has garnered interest in the machine learning community because of its fast global convergence rate guarantees.
The original convergence rate proofs of Nesterov's accelerated method are derived using the method of estimate sequences,
which has proven difficult to interpret.
This observation motivated a sequence of recent works on new analysis and interpretations of Nesterov's accelerated method \citep{bubeck2015geometric, Lessard2014, Su2016, drusvyatskiy2016, flammarion2015, wibisono2016, wilson2016}.

Many of these recent papers rely on Lyapunov-based stability arguments. Lyapunov theory is an analogue to the principle of minimum energy and brings a physical intuition to convergence behaviors. When applying such proof techniques, one must construct a \textit{Lyapunov function}, which is a nonnegative function of the algorithm's state (an ``internal energy'') that decreases along all admissible trajectories. Once a Lyapunov function is found, one can relate the rate of decrease of this internal energy to the rate of convergence of the algorithm. The main challenge in applying Lyapunov's method is finding a suitable Lyapunov function.

There are two main approaches for Lyapunov function constructions.
The first approach adopts the integral quadratic constraint (IQC) framework~\citep{Megretski1997} from control theory and formulates a linear matrix equality (LMI) whose feasibility implies the linear convergence of the algorithm~\citep{Lessard2014}. Despite the generality of the IQC approach and the small size of the associated LMI, one must typically resort to numerical simulations to solve the LMI.
The second approach seeks an ordinary differential equation (ODE) that can be appropriately discretized to yield the algorithm of interest. One can then gain intuition about the trajectories of the algorithm by examining trajectories of the continuous-time ODE~\citep{Su2016, wibisono2016, wilson2016}. The work of \citet{wilson2016} also establishes a general equivalence between Lyapunov functions and estimate sequence proofs.

In this paper, we bridge the IQC approach \citep{Lessard2014} and the discretization approach \citep{wilson2016} by using dissipativity theory~\citep{willems72a, willems72b}. The term ``dissipativity'' is borrowed from the notion of energy dissipation in physics and the theory provides a general approach for the intuitive understanding and construction of Lyapunov functions. Dissipativity for quadratic Lyapunov functions in particular \citep{willems72b} has seen widespread use in controls. In the sequel, we tailor dissipativity theory to the automated construction of Lyapunov functions, which are not necessarily quadratic, for the analysis of optimization algorithms.
Our dissipation inequality leads to an LMI condition that is simpler than the one in \citet{Lessard2014} and hence more amenable to being solved analytically. When specialized to Nesterov's accelerated method, our LMI recovers the Lyapunov function proposed in \citet{wilson2016}.
Finally, we extend our LMI-based approach to the sublinear convergence analysis of Nesterov's accelerated method in both discrete and continuous time domains. This complements the original LMI-based approach in \citet{Lessard2014}, which mainly handles linear convergence rate analyses.

An LMI-based approach for sublinear rate analysis similar to ours was independently and simultaneously proposed by \citet{morari}.
While this work and the present work both draw connections to the continuous-time results mentioned above, different algorithms and function classes are emphasized. For example, \citet{morari} develops LMIs for gradient descent and proximal/projection-based variants with convex/quasi-convex objective functions. In contrast, the present work develops LMIs tailored to the analysis of discrete-time accelerated methods
 and Nesterov's method in particular.

\section{Preliminaries}

\subsection{Notation}
Let $\R$ and $\R_+$ denote the real and nonnegative real numbers, respectively.
 Let $I_p$ and $0_p$ denote the $p\times p$ identity and zero matrices, respectively.
The Kronecker product of two matrices is denoted $A \otimes B$
and satisfies the properties $(A\otimes B)^\tp=A^\tp \otimes B^\tp$ and $(A\otimes B)(C\otimes D)=(AC)\otimes (BD)$ when the matrices have compatible dimensions. Matrix inequalities hold in the semidefinite sense unless otherwise indicated.
%
%
A differentiable function $f:\R^p\to \R$ is $m$-strongly convex if  
$f(x)\ge f(y)+\nabla f(y)^\tp (x-y)+\frac{m}{2} \|x-y\|^2$ for all $x,y \in \R^p$ and is $L$-smooth if  $\|\nabla f(x)-\nabla f(y)\|\le L \|x-y\|$ for all $x,y\in \R^p$. Note that $f$ is convex if $f$ is $0$-strongly convex.
We use $x_\star$ to denote a point satisfying $\nabla f(x_\star)=0$. When $f$ is $L$-smooth and $m$-strongly convex, $x_\star$ is unique.

\subsection{Classical Dissipativity Theory}

Consider a linear dynamical system governed by the state-space model
\begin{align}
\label{eq:sys1}
\xi_{k+1}=A\xi_k+B w_k.
\end{align}
Here, $\xi_k\in \R^{n_\xi}$ is the state, $w_k\in \R^{n_w}$ is the input, $A\in \R^{{n_\xi}\times n_\xi}$ is the state transition matrix, and $B\in \R^{{n_\xi}\times n_w}$ is the input matrix. The input $w_k$ can be physically interpreted as a \textit{driving force}.
Classical dissipativity theory describes how the internal energy stored in the state $\xi_k$ evolves with time $k$ as one applies the input $w_k$ to drive the system. A key concept in dissipativity theory is the \textit{supply rate}, which characterizes the energy change in $\xi_k$ due to the driving force $w_k$. The supply rate is a function $S:\R^{n_\xi}\times \R^{n_w}\to \R$ that maps any state/input pair $(\xi, w)$ to a scalar measuring the amount of energy delivered from $w$ to state $\xi$. Now we introduce the notion 
of dissipativity.

\begin{defn}
The dynamical system \eqref{eq:sys1} is dissipative with respect to the supply rate $S$ if there exists a function $V:\R^{n_\xi}\to \R_+$ such that $V(\xi) \ge 0$ for all $\xi \in \R^{n_\xi}$ and
\begin{align}
\label{eq:DI}
V(\xi_{k+1})- V(\xi_k)\le S(\xi_k, w_k)
\end{align}
for all $k$. The function $V$ is called a storage function, which quantifies the energy stored in the state $\xi$. In addition, \eqref{eq:DI} is called the dissipation inequality.
\end{defn}

The dissipation inequality \eqref{eq:DI} states that the change of the internal energy stored in $\xi_k$ is equal to the difference between the supplied energy and the dissipated energy. Since there will always be some energy dissipating from the system, the change in the stored energy (which is exactly $V(\xi_{k+1})-V(\xi_k)$) is always bounded above by the energy supplied to the system (which is exactly $S(\xi_k, w_k)$).
A variant of \eqref{eq:DI} known as the exponential dissipation inequality states that for some $0\le \rho < 1$, we have
\begin{equation}\label{eq:DI1}
V(\xi_{k+1})- \rho^2 V(\xi_k)\le S(\xi_k, w_k),
\end{equation}
which states that at least a fraction $(1-\rho^2)$ of the internal energy will dissipate at every step.

The dissipation inequality \eqref{eq:DI1} provides a direct way to construct a Lyapunov function based on the storage function.
It is often the case that we have prior knowledge about how the driving force $w_k$ is related to the state $\xi_k$. Thus, we may know additional information about the supply rate function $S(\xi_k,w_k)$. For example, if $S(\xi_k, w_k)\le 0$ for all $k$ then \eqref{eq:DI1} directly implies that $V(\xi_{k+1})\le \rho^2 V(\xi_k)$, and the storage function $V$ can serve as a Lyapunov function. The condition $S(\xi_k, w_k)\le 0$ means
that the driving force $w_k$ does not inject any energy into the system and may even extract energy out of the system. Then, the internal energy will decrease no slower than the linear rate $\rho^2$ and approach a minimum value at equilibrium.

An advantage of dissipativity theory is that for any quadratic supply rate,  one can automatically construct the dissipation inequality using semidefinite programming.
{We now state a standard result from the controls literature.

\begin{thm}
\label{thm:DI}
Consider the following quadratic supply rate with $X\in \R^{(n_{\xi}+n_w)\times (n_{\xi}+n_w)}$ and $X=X^\tp$.
\begin{align}
\label{eq:supply}
S(\xi, w) \defeq \bmat{\xi \\ w}^\tp X \bmat{\xi \\ w}.
\end{align}
If there exists a matrix $P\in \R^{n_{\xi}\times n_{\xi}}$ with $P\ge 0$ such that
\begin{align}
\label{eq:lmi1}
\bmat{A^\tp P A-\rho^2 P & A^\tp P B \\ B^\tp P A & B^\tp P B}-X\le 0,
\end{align}
then the dissipation inequality~\eqref{eq:DI1} holds for all trajectories of~\eqref{eq:sys1} with $V(\xi)\defeq\xi^\tp P \xi$.
\end{thm}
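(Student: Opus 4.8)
The plan is a direct algebraic verification. The idea is to substitute the dynamics \eqref{eq:sys1} into the candidate storage function $V(\xi)\defeq\xi^\tp P\xi$, rewrite $V(\xi_{k+1})-\rho^2 V(\xi_k)$ as a single quadratic form in the stacked vector $\bmat{\xi_k \\ w_k}$, and then read off \eqref{eq:DI1} from the LMI \eqref{eq:lmi1}.

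First, since $P\ge 0$ we have $V(\xi)=\xi^\tp P\xi\ge 0$ for all $\xi\in\R^{n_\xi}$, so $V$ is a valid storage function mapping into $\R_+$; note that neither positive definiteness of $P$ nor any stability assumption on $A$ is needed. Next, for an arbitrary trajectory of \eqref{eq:sys1} I would expand $V(\xi_{k+1})=(A\xi_k+Bw_k)^\tp P(A\xi_k+Bw_k)$, subtract $\rho^2\xi_k^\tp P\xi_k$, and group the resulting terms (being careful with the cross term $\xi_k^\tp A^\tp P B w_k$ and its transpose) into the block form $V(\xi_{k+1})-\rho^2 V(\xi_k)=\bmat{\xi_k \\ w_k}^\tp\bmat{A^\tp P A-\rho^2 P & A^\tp P B \\ B^\tp P A & B^\tp P B}\bmat{\xi_k \\ w_k}$.

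Finally, multiplying the matrix inequality \eqref{eq:lmi1} on the left by $\bmat{\xi_k \\ w_k}^\tp$ and on the right by $\bmat{\xi_k \\ w_k}$ turns the semidefinite inequality into the scalar inequality $\bmat{\xi_k \\ w_k}^\tp\bmat{A^\tp P A-\rho^2 P & A^\tp P B \\ B^\tp P A & B^\tp P B}\bmat{\xi_k \\ w_k}\le\bmat{\xi_k \\ w_k}^\tp X\bmat{\xi_k \\ w_k}$, whose right-hand side is exactly $S(\xi_k,w_k)$ by \eqref{eq:supply}. Combining with the block identity from the previous step gives $V(\xi_{k+1})-\rho^2 V(\xi_k)\le S(\xi_k,w_k)$, and since $k$ was arbitrary the dissipation inequality \eqref{eq:DI1} holds along all trajectories. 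There is no genuine obstacle here: the statement is a standard fact from the controls literature, and the ``proof'' is essentially careful bookkeeping of the quadratic expansion. The only point worth stating explicitly is that a matrix being negative semidefinite is equivalent to the associated quadratic form being nonpositive on every vector, which is precisely what converts the LMI \eqref{eq:lmi1} into the per-step dissipation bound.
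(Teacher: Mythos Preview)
Your proposal is correct and follows essentially the same approach as the paper: expand $V(\xi_{k+1})$ via the dynamics into a quadratic form in $\bmat{\xi_k \\ w_k}$, then left- and right-multiply the LMI~\eqref{eq:lmi1} by this vector to obtain the dissipation inequality. Your explicit remark that $P\ge 0$ ensures $V\ge 0$ is a small addition the paper leaves implicit, but otherwise the arguments are identical.
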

\begin{proof}
Based on the state-space model \eqref{eq:sys1}, we have
\begin{align*}
\begin{split}
V(\xi_{k+1})&=\xi_{k+1}^\tp P \xi_{k+1}\\
&=(A\xi_k+Bw_k)^\tp P (A\xi_k+Bw_k)\\
&=\bmat{\xi_k \\ w_k}^\tp \bmat{A^\tp P A & A^\tp P B \\ B^\tp P A & B^\tp P B} \bmat{\xi_k \\ w_k}.
\end{split}
\end{align*}
Hence we can left and
  right multiply \eqref{eq:lmi1}  by $\bmat{\xi_k^\tp & w_k^\tp}$ and $\bmat{\xi_k^\tp & w_k^\tp}^\tp$, and directly obtain the desired conclusion.
\end{proof}
The left-hand side of~\eqref{eq:lmi1} is linear in $P$, so~\eqref{eq:lmi1} is a \emph{linear matrix inequality} (LMI) for any fixed $A,B,X,\rho$. The set of $P$ such that \eqref{eq:lmi1} holds is therefore a convex set and can be efficiently searched using interior point methods, for example.
To apply the dissipativity theory for linear convergence rate analysis, one typically follows two steps.
\begin{enumerate}
\item Choose a proper quadratic supply rate function $S$ satisfying certain desired properties, e.g. $S(\xi_k, w_k)\le 0$.
\item Solve the LMI \eqref{eq:lmi1} to obtain a storage function $V$, which is then used to construct a Lyapunov function.
\end{enumerate}

In step 2, the LMI obtained is typically very small, e.g. $2\times 2$ or $3\times 3$, so we can often solve the LMI analytically.
For illustrative purposes, we rephrase the existing LMI analysis of the gradient descent method \citep[\S4.4]{Lessard2014} using the notion of dissipativity.

\subsection{Example: Dissipativity for Gradient Descent}\label{sec:grad1}
There is an intrinsic connection between dissipativity theory and the IQC approach \citep{megretskiCSH, seiler13}. The IQC analysis of the gradient descent method in \citet{Lessard2014} may be reframed using dissipativity theory. Then, the pointwise IQC \citep[Lemma 6]{Lessard2014} amounts to using a quadratic supply rate $S$ with $S\le 0$. Specifically, assume $f$ is $L$-smooth and $m$-strongly convex,
and consider the gradient descent method
\begin{align}\label{eq:gradient_descent}
x_{k+1}=x_k-\alpha \nabla f(x_k).
\end{align}
We have $x_{k+1}-x_\star=x_k-x_\star -\alpha \nabla f(x_k)$, where $x_\star$ is the unique point satisfying $\nabla f(x_\star)=0$. Define $\xi_k\defeq x_k-x_\star$ and $w_k \defeq \nabla f(x_k)$. Then the gradient descent method is modeled by \eqref{eq:sys1} with $A\defeq I_p$ and $B\defeq -\alpha I_p$. Since $w_k=\nabla f(\xi_k+x_\star)$, we can  define the following quadratic supply rate
\begin{align}
\label{eq:supply1}
S(\xi_k, w_k)=\bmat{\xi_k \\ w_k}^\tp \bmat{2m LI_p & -(m+L) I_p \\ -(m+L)I_p & 2I_p}\bmat{ \xi_k \\ w_k}
\end{align}
By co-coercivity, we have $S(\xi_k, w_k)\le 0$ for all $k$. This just restates \citet[Lemma 6]{Lessard2014}.
Then, we can directly apply Theorem \ref{thm:DI} to construct the dissipation inequality. We can parameterize $P=p\otimes I_p$  and define the storage function as $V(\xi_k)=p\norm{\xi_k}^2=p\norm{x_k-x_\star}^2$. The LMI \eqref{eq:lmi1} becomes
\begin{align*}
\left(\bmat{(1-\rho^2)p & -\alpha p \\ -\alpha p & \alpha^2 p}+\bmat{-2m L& m+L \\ m+L & -2}\right)\otimes I_p \le 0.
\end{align*}
Hence for any $0\le \rho <1$, we have $p\norm{x_{k+1}-x_\star}^2 \le \rho^2 p\norm{x_k-x_\star}^2$ if there exists $p\ge 0$ such that
\begin{align}
\label{eq:lmiGD}
\bmat{(1-\rho^2)p & -\alpha p \\ -\alpha p & \alpha^2 p}+\bmat{-2m L& m+L \\ m+L & -2}\le 0
\end{align}
The LMI~\eqref{eq:lmiGD} is simple and can be analytically solved to recover the existing rate results for the gradient descent method. For example, we can choose $(\alpha,\rho,p)$ to be $(\frac{1}{L},1-\frac{m}{L},L^2)$ or $(\frac{2}{L+m},\frac{L-m}{L+m},\frac{1}{2}(L+m)^2$) to immediately recover the standard rate results in~\citet{polyak}.

Based on the example above, it is evident that choosing a proper supply rate is critical for the construction of a Lyapunov function. The supply rate \eqref{eq:supply1} turns out to be inadequate for the analysis of Nesterov's accelerated method. For Nesterov's accelerated method, the dependence between the internal energy and the driving force is more complicated due to the presence of momentum terms. We will next develop a new supply rate that captures this complicated dependence. 
We will also make use of this new supply rate to recover the standard linear rate results for Nesterov's accelerated method.

\section{Dissipativity for Accelerated Linear Rates}
\subsection{Dissipativity for Nesterov's Method}
Suppose $f$ is $L$-smooth and $m$-strongly convex with $m>0$. Let $x_\star$ be the unique point satisfying $\nabla f(x_\star)=0$.
Now we consider Nesterov's accelerated method, which uses the following iteration rule to find $x_\star$:
\begin{subequations}\label{eq:NAG}
\begin{align}
x_{k+1}&=y_k-\alpha \nabla f(y_k), \\
y_k&=(1+\beta) x_k-\beta x_{k-1}.
\end{align}
\end{subequations}
We can rewrite \eqref{eq:NAG} as
\begin{align}
\label{eq:fdNAG}
\begin{split}
\bmat{x_{k+1}-x_\star \\ x_k-x_\star}=A\bmat{x_k-x_\star \\ x_{k-1}-x_\star}+Bw_k
\end{split}
\end{align}
where $w_k\defeq\nabla f(y_k)=\nabla f\left((1+\beta)x_k-\beta x_{k-1}\right)$. Also, $A\defeq\tilde{A}\otimes I_p$, $B\defeq\tilde{B}\otimes I_p$, and $\tilde{A}, \tilde{B}$ are defined by
\begin{align}
\label{eq:NAGAB}
\tilde{A}\defeq\bmat{1+\beta & -\beta \\ 1 & 0}, \,\quad \tilde{B}\defeq\bmat{-\alpha \\ 0}.
\end{align}
Hence, Nesterov's accelerated method \eqref{eq:NAG} is in the form of \eqref{eq:sys1} with $\xi_k=\bmat{(x_k-x_\star)^\tp & (x_{k-1}-x_\star)^\tp}^\tp$.

Nesterov's accelerated method can improve the convergence rate since the input $w_k$ depends on both $x_k$ and $x_{k-1}$, and drives the state in a specific direction, i.e. along $(1+\beta)x_k-\beta x_{k-1}$.
This leads to a supply rate that extracts energy out of the system significantly faster than with gradient descent. This is formally stated in the next lemma.


\begin{lem}
\label{lem:SupplyNAG}
Let $f$ be $L$-smooth and $m$-strongly convex with $m>0$. Let  $x_\star$ be the unique point satisfying $\nabla f(x_\star)=~0$.
Consider Nesterov's method~\eqref{eq:NAG} or equivalently \eqref{eq:fdNAG}. The following inequalities hold for all trajectories.
\begin{align*}
\bmat{x_k-x_\star\\x_{k-1}-x_\star \\ \nabla f(y_k)}^\tp X_1 \bmat{x_k-x_\star\\x_{k-1}-x_\star \\ \nabla f(y_k) } &\le f(x_k)-f(x_{k+1})\\
\bmat{x_k-x_\star\\x_{k-1}-x_\star \\ \nabla f(y_k)}^\tp X_2  \bmat{x_k-x_\star\\x_{k-1}-x_\star \\ \nabla f(y_k) } &\le f(x_\star)-f(x_{k+1})
\end{align*}
where $X_i=\tilde{X}_i\otimes I_p$ for $i=1,2$, and $\tilde{X}_i$ are defined by
\begin{align}
\label{eq:NAGX1}
\tilde{X}_1&\defeq\frac{1}{2}\!\bmat{ \beta^2 m & -\beta^2 m & -\beta\\ -\beta^2 m  & \beta^2 m & \beta\\ -\beta & \beta & \alpha(2-L\alpha)} \\
\label{eq:NAGX2}
\tilde{X}_2&\defeq\frac{1}{2}\!\addtolength{\arraycolsep}{-3pt}\bmat{ (1+\beta)^2 m & -\beta(1+\beta) m & -(1+\beta)\\ -\beta(1+\beta) m  & \beta^2 m & \beta\\ -(1+\beta) & \beta & \alpha(2-L\alpha)}\!\!
\end{align}
Given any $0\le \rho\le 1$, one can define the supply rate as \eqref{eq:supply} with a particular choice of $X\defeq\rho^2 X_1+(1-\rho^2)X_2$. Then this supply rate satisfies the condition
\begin{multline}
\label{eq:NAGSupply}
S(\xi_k, w_k) \le \rho^2 (f(x_k)-f(x_\star))\\-(f(x_{k+1})-f(x_\star)).
\end{multline}
\end{lem}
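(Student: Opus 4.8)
All three statements follow by combining two elementary inequalities — the quadratic upper bound from $L$-smoothness and the quadratic lower bound from $m$-strong convexity — and then matching the resulting quadratic forms against $\tilde X_1$ and $\tilde X_2$. To lighten notation, write $a\defeq x_k-x_\star$, $b\defeq x_{k-1}-x_\star$, and $g\defeq\nabla f(y_k)$, so the vector appearing in the lemma is $\zeta_k\defeq\bmat{a^\tp & b^\tp & g^\tp}^\tp$. Since $X_i=\tilde X_i\otimes I_p$, each claimed inequality $\zeta_k^\tp X_i\zeta_k\le\cdots$ reduces to a scalar identity in the six quantities $\norm{a}^2,\norm{b}^2,\norm{g}^2,a^\tp b,a^\tp g,b^\tp g$ governed by the $3\times3$ block $\tilde X_i$, so it suffices to work with the blocks.

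The first building block is the descent lemma: applying $L$-smoothness along the gradient step $x_{k+1}=y_k-\alpha g$ gives $f(x_{k+1})\le f(y_k)-\alpha\norm{g}^2+\frac{L\alpha^2}{2}\norm{g}^2$, hence
\[
f(y_k)-f(x_{k+1})\ \ge\ \frac{\alpha(2-L\alpha)}{2}\,\norm{g}^2,
\]
which accounts for the $(3,3)$ entry of both $\tilde X_1$ and $\tilde X_2$. The second building block is $m$-strong convexity of $f$ about $y_k$: for any $x$,
\[
f(x)-f(y_k)\ \ge\ g^\tp(x-y_k)+\frac{m}{2}\norm{x-y_k}^2.
\]
From the momentum step $y_k=(1+\beta)x_k-\beta x_{k-1}$ I would compute the displacements $x_k-y_k=-\beta(a-b)$ and $x_\star-y_k=-(1+\beta)a+\beta b$.

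For the first inequality I would add the descent bound to the strong-convexity bound evaluated at $x=x_k$, using $f(x_k)-f(x_{k+1})=\bigl(f(x_k)-f(y_k)\bigr)+\bigl(f(y_k)-f(x_{k+1})\bigr)$; substituting $x_k-y_k=-\beta(a-b)$, expanding the square, and collecting coefficients reproduces exactly $\tilde X_1$ (recalling that an off-diagonal entry $\tilde X_{ij}$ of a symmetric block matrix contributes $2\tilde X_{ij}$ to the corresponding cross term). The second inequality is identical, except that strong convexity is evaluated at $x=x_\star$ and one uses $x_\star-y_k=-(1+\beta)a+\beta b$, which yields $\tilde X_2$. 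The only genuine work is this coefficient bookkeeping — expanding $\norm{-\beta(a-b)}^2$ and $\norm{-(1+\beta)a+\beta b}^2$ and tracking signs and factors of two — and it is the step where a slip is most likely; there is no conceptual obstacle.

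Finally, since $0\le\rho\le1$, both $\rho^2$ and $1-\rho^2$ are nonnegative, so I may take the combination of the first inequality with weight $\rho^2$ and the second with weight $1-\rho^2$. Its left-hand side is $\zeta_k^\tp\bigl(\rho^2 X_1+(1-\rho^2)X_2\bigr)\zeta_k=S(\xi_k,w_k)$ by the choice $X\defeq\rho^2 X_1+(1-\rho^2)X_2$, and its right-hand side telescopes:
\begin{align*}
&\rho^2\bigl(f(x_k)-f(x_{k+1})\bigr)+(1-\rho^2)\bigl(f(x_\star)-f(x_{k+1})\bigr)\\
&\qquad=\rho^2\bigl(f(x_k)-f(x_\star)\bigr)-\bigl(f(x_{k+1})-f(x_\star)\bigr),
\end{align*}
which is exactly \eqref{eq:NAGSupply}.
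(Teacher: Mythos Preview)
Your proposal is correct and follows essentially the same route as the paper: combine the $L$-smooth descent inequality at the gradient step $x_{k+1}=y_k-\alpha\nabla f(y_k)$ with the $m$-strong-convexity lower bound about $y_k$, evaluated once at $x_k$ (giving $\tilde X_1$) and once at $x_\star$ (giving $\tilde X_2$), then take the convex combination $\rho^2\cdot(\text{first})+(1-\rho^2)\cdot(\text{second})$. The only difference is cosmetic---the paper writes out the intermediate quadratic form in $\bmat{(x_k-y_k)^\tp & \nabla f(y_k)^\tp}^\tp$ before substituting $y_k=(1+\beta)x_k-\beta x_{k-1}$, while you substitute the displacements $x_k-y_k=-\beta(a-b)$ and $x_\star-y_k=-(1+\beta)a+\beta b$ directly.
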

\begin{proof}
The proof is similar to the proof of (3.23)--(3.24) in \citet{bubeck2015}, but \citet[Lemma 3.6]{bubeck2015} must be modified to account for the strong convexity of $f$. See the supplementary material for a detailed proof.
\end{proof}
The supply rate~\eqref{eq:NAGSupply} captures how the driving force $w_k$ is impacting the future state $x_{k+1}$. The physical interpretation is that there is some amount of  hidden energy in the system that takes the form of $f(x_k)-f(x_\star)$.  The supply rate condition \eqref{eq:NAGSupply} describes how the driving force $w_k$ is coupled with the hidden energy in the future. It says the delivered energy is bounded  by a weighted decrease of the hidden energy. Based on this supply rate, one can search Lyapunov function using the following theorem.

\begin{thm}
\label{thm:theNAG}
Let $f$ be $L$-smooth and $m$-strongly convex with $m>0$. Let  $x_\star$ be the unique point satisfying $\nabla f(x_\star)=~0$.
Consider Nesterov's accelerated method~\eqref{eq:NAG}.
For any rate $0\le \rho<1$, set $\tilde{X}\defeq\rho^2 \tilde{X}_1 +(1-\rho^2)\tilde{X}_2$ where $\tilde{X}_1$ and $\tilde{X}_2$ are defined in \eqref{eq:NAGX1}--\eqref{eq:NAGX2}. In addition, let $\tilde{A}, \tilde{B}$ be defined by \eqref{eq:NAGAB}.
If there exists a matrix $\tilde P \in \R^{2\times 2}$ with $\tilde P \ge 0$ such that
\begin{align}
\label{eq:lmi2}
\bmat{\tilde{A}^\tp \tilde{P} \tilde{A}-\rho^2 \tilde{P} & \tilde{A}^\tp \tilde{P} \tilde{B}\\ \tilde{B}^\tp \tilde{P} \tilde{A} & \tilde{B}^\tp \tilde{P} \tilde{B}}-\tilde{X}\le 0
\end{align}
then set $P\defeq\tilde{P}\otimes I_p$ and define the Lyapunov function
\begin{align}
\label{eq:LyaFun}
\mathcal{V}_k \defeq\bmat{x_k-x_\star\\x_{k-1}-x_\star }^\tp P \bmat{x_k-x_\star\\x_{k-1}-x_\star }+f(x_k)-f(x_\star),
\end{align}
which satisfies $\mathcal{V}_{k+1}\le \rho^2 \mathcal{V}_k$ for all $k$. Moreover, we have $f(x_k)-f(x_\star)\le \rho^{2k} \mathcal{V}_0\,$ for Nesterov's method.
\end{thm}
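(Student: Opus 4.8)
The plan is to assemble the conclusion from three pieces already available: the Kronecker structure of Nesterov's iteration, the dissipation LMI of Theorem~\ref{thm:DI}, and the supply-rate bound of Lemma~\ref{lem:SupplyNAG}. First I would lift the $2\times 2$ inequality \eqref{eq:lmi2} to an inequality in $\R^{2p\times 2p}$. Since $A=\tilde A\otimes I_p$, $B=\tilde B\otimes I_p$, $P=\tilde P\otimes I_p$, and $X=\tilde X\otimes I_p$ with $\tilde X=\rho^2\tilde X_1+(1-\rho^2)\tilde X_2$, the identities $(M\otimes I_p)^\tp=M^\tp\otimes I_p$ and $(M\otimes I_p)(N\otimes I_p)=(MN)\otimes I_p$ give
\[
\bmat{A^\tp P A-\rho^2 P & A^\tp P B\\ B^\tp P A & B^\tp P B}-X=\left(\bmat{\tilde A^\tp\tilde P\tilde A-\rho^2\tilde P & \tilde A^\tp\tilde P\tilde B\\ \tilde B^\tp\tilde P\tilde A & \tilde B^\tp\tilde P\tilde B}-\tilde X\right)\otimes I_p.
\]
Because $\tilde P\ge 0$ implies $P\ge 0$, and $M\le 0$ implies $M\otimes I_p\le 0$, feasibility of \eqref{eq:lmi2} yields feasibility of \eqref{eq:lmi1} for the lifted data $(A,B,X,P)$ at the same $\rho$.

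Next I would invoke Theorem~\ref{thm:DI} with storage function $V(\xi)\defeq\xi^\tp P\xi$, obtaining the exponential dissipation inequality $V(\xi_{k+1})-\rho^2 V(\xi_k)\le S(\xi_k,w_k)$ along every trajectory of \eqref{eq:fdNAG}, where $S$ is the quadratic supply rate \eqref{eq:supply} with $X=\rho^2 X_1+(1-\rho^2)X_2$. The key observation linking this to Lemma~\ref{lem:SupplyNAG} is that $\xi_k=\bmat{(x_k-x_\star)^\tp & (x_{k-1}-x_\star)^\tp}^\tp$ and $w_k=\nabla f(y_k)$, so the stacked vector $\bmat{\xi_k^\tp & w_k^\tp}^\tp$ is exactly the vector appearing in the quadratic forms of Lemma~\ref{lem:SupplyNAG}; hence $S(\xi_k,w_k)=\bmat{\xi_k^\tp&w_k^\tp}X\bmat{\xi_k^\tp&w_k^\tp}^\tp$ and \eqref{eq:NAGSupply} applies, giving $S(\xi_k,w_k)\le\rho^2\bigl(f(x_k)-f(x_\star)\bigr)-\bigl(f(x_{k+1})-f(x_\star)\bigr)$.

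Combining the two bounds and rearranging terms,
\[
\underbrace{V(\xi_{k+1})+f(x_{k+1})-f(x_\star)}_{\mathcal V_{k+1}}\le\rho^2\underbrace{\bigl(V(\xi_k)+f(x_k)-f(x_\star)\bigr)}_{\mathcal V_k},
\]
which is exactly $\mathcal V_{k+1}\le\rho^2\mathcal V_k$ for $\mathcal V_k$ as in \eqref{eq:LyaFun}. Iterating this recursion gives $\mathcal V_k\le\rho^{2k}\mathcal V_0$; since $P\ge 0$ we have $V(\xi_k)\ge 0$, and since $f$ is convex with $\nabla f(x_\star)=0$ we have $f(x_k)-f(x_\star)\ge 0$, so $\mathcal V_k\ge 0$ is a genuine Lyapunov function and $f(x_k)-f(x_\star)\le\mathcal V_k\le\rho^{2k}\mathcal V_0$.

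I do not expect a genuine obstacle here: the analytic heart of the argument — the strong-convexity/smoothness inequalities that produce $\tilde X_1$ and $\tilde X_2$ — is entirely contained in Lemma~\ref{lem:SupplyNAG}, and Theorem~\ref{thm:DI} supplies the dissipation inequality mechanically. The only point requiring care is bookkeeping: verifying that the $\otimes I_p$ lift is applied consistently to $A$, $B$, $P$, and $X$, and that the supply-rate matrix passed to Theorem~\ref{thm:DI} is the same convex combination $\rho^2 X_1+(1-\rho^2)X_2$ used in Lemma~\ref{lem:SupplyNAG}, so that the telescoping above is valid.
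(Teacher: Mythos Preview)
Your proposal is correct and follows essentially the same route as the paper's own proof: lift \eqref{eq:lmi2} via the Kronecker product to obtain \eqref{eq:lmi1}, apply Theorem~\ref{thm:DI} to get the exponential dissipation inequality, then combine with the supply-rate bound \eqref{eq:NAGSupply} from Lemma~\ref{lem:SupplyNAG} to conclude $\mathcal V_{k+1}\le\rho^2\mathcal V_k$ and hence $f(x_k)-f(x_\star)\le\rho^{2k}\mathcal V_0$. Your write-up is in fact more explicit than the paper's about the Kronecker bookkeeping and the nonnegativity of $\mathcal V_k$, but the argument is the same.
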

\begin{proof}
Take the Kronecker product of \eqref{eq:lmi2} and $I_p$, and hence \eqref{eq:lmi1} holds with $A\defeq\tilde{A}\otimes I_p$, $B\defeq\tilde{B}\otimes I_p$, and $X\defeq\tilde{X}\otimes I_p$. Let the supply rate $S$ be defined by \eqref{eq:supply}.
Then, define the quadratic storage function $V(\xi_k)\defeq\xi_k^\tp P \xi_k$ and apply Theorem \ref{thm:DI} to show $V(\xi_{k+1})-\rho^2 V(\xi_k)\le S(\xi_k, w_k)$. Based on the supply rate condition \eqref{eq:NAGSupply}, we can define the Lyapunov function $\mathcal{V}_k\defeq V(\xi_k)+f(x_k)-f(x_\star)$ and show $\mathcal{V}_{k+1}\le \rho^2\mathcal{V}_k$. Finally, since $P\ge 0$, we have $f(x_k)-f(x_\star)\le \rho^{2k} \mathcal{V}_0$.
\end{proof}

We can immediately recover the proposed Lyapunov function in \citet[Theorem 6]{wilson2016} by setting $\tilde{P}$ to
\begin{align}
\label{eq:PNAG}
\tilde{P}=\bmat{\sqrt{\frac{L}{2}}\\[2mm] \sqrt{\frac{m\vphantom{L}}{2}}-\sqrt{\frac{L}{2}}}\bmat{\sqrt{\frac{L}{2}}& \sqrt{\frac{m\vphantom{L}}{2}}-\sqrt{\frac{L}{2}}}.
\end{align}
Clearly $\tilde{P} \ge 0$. Now define $\kappa\defeq \frac{L}{m}$. Given $\alpha=\frac{1}{L}$, $\beta=\frac{\sqrt{\kappa}-1}{\sqrt{\kappa}+1}$, and $\rho^2=1-\sqrt{\frac{m}{L}}$, it is straightforward to verify that the left side of the LMI \eqref{eq:lmi1} is equal to 
\begin{align*}
\frac{m(\sqrt{\kappa}-1)^3}{2(\kappa+\sqrt{\kappa})}\bmat{-1 & 1 & 0\\1 & -1 & 0\\ 0 & 0 & 0},
\end{align*}
which is clearly negative semidefinite. Hence we can immediately construct a Lyapunov function using \eqref{eq:LyaFun} to prove the linear rate $\rho^2=1-\sqrt{\frac{m}{L}}$.

Searching for analytic certificates such as~\eqref{eq:PNAG} can either be carried out by directly analyzing the LMI, or by using numerical solutions to guide the search. For example, numerically solving~\eqref{eq:lmi2} for any fixed $L$ and $m$ directly yields \eqref{eq:PNAG}, which makes finding the analytical expression easy.

\subsection{Dissipativity Theory for More General Methods}
We demonstrate the generality of the dissipativity theory on a more general variant of Nesterov's method.
Consider a modified accelerated method 
\begin{subequations}\label{eq:GAM}
\begin{align}
x_{k+1}&=(1+\beta) x_k-\beta x_{k-1}-\alpha \nabla f(y_k), \\
y_k&=(1+\eta) x_k-\eta x_{k-1}.
\end{align}
\end{subequations}
When $\beta=\eta$, we recover Nesterov's accelerated method. When $\eta=0$, we recover the Heavy-ball method of~\citet{polyak}.
We can rewrite \eqref{eq:GAM} in state-space form \eqref{eq:fdNAG}
where $w_k\defeq\nabla f(y_k)=\nabla f\left((1+\eta)x_k-\eta x_{k-1}\right)$, $A\defeq\tilde{A}\otimes I_p$, $B\defeq\tilde{B}\otimes I_p$, and $\tilde{A}, \tilde{B}$ are defined by
\begin{align*}
\tilde{A}\defeq\bmat{1+\beta & -\beta \\ 1 & 0}, \,\quad \tilde{B}\defeq\bmat{-\alpha \\ 0}.
\end{align*}

\begin{lem}
\label{lem:SupplyGAM}
Let $f$ be $L$-smooth and $m$-strongly convex with $m>0$. Let  $x_\star$ be the unique point satisfying $\nabla f(x_\star)=~0$.
Consider the general accelerated method \eqref{eq:GAM}. Define the state  $\xi_k\defeq\bmat{(x_k-x_\star)^\tp & (x_{k-1}-x_\star)^\tp}^\tp$ and the input $w_k\defeq\nabla f(y_k)=\nabla f((1+\eta)x_k-\eta x_{k-1})$. Then the following inequalities hold for all trajectories.
\begin{align}\label{eq:GAMSupply1}
\bmat{\xi_k \\ w_k}^\tp (X_1+X_2) \bmat{\xi_k\\ w_k} &\le f(x_k)-f(x_{k+1})\\
\label{eq:GAMSupply2}
\bmat{\xi_k \\ w_k}^\tp  (X_1+X_3) \bmat{\xi_k \\ w_k} &\le f(x_\star)-f(x_{k+1})
\end{align} with $X_i\defeq\tilde{X}_i\otimes I_p\,$ for $i=1,2, 3$, and $\tilde{X}_i$ are defined by
\begin{align*}
\tilde{X}_1&\defeq\frac{1}{2}\bmat{ -L\delta^2 & L\delta^2  & -(1-L\alpha)\delta\\ L\delta^2  & -L\delta^2 & (1-L\alpha)\delta\\ -(1-L\alpha)\delta & (1-L\alpha)\delta & \alpha(2-L\alpha)} \\
\tilde{X}_2&\defeq\frac{1}{2}\bmat{ \eta^2 m & -\eta^2 m & -\eta\\ -\eta^2 m  & \eta^2 m & \eta\\ -\eta & \eta & 0} \\
\tilde{X}_3&\defeq\frac{1}{2}\bmat{ (1+\eta)^2 m & -\eta(1+\eta) m & -(1+\eta)\\ -\eta(1+\eta) m  & \eta^2 m & \eta\\ -(1+\eta) & \eta & 0}
\end{align*}
with $\delta\defeq \beta-\eta$. 
In addition, one can define the supply rate as \eqref{eq:supply} with $X\defeq X_1+\rho^2 X_2+(1-\rho^2)X_3$. Then for all trajectories $(\xi_k, w_k)$ of the general accelerated method \eqref{eq:GAM}, this supply rate satisfies the inequality
\begin{multline}\label{supp}
S(\xi_k, w_k)\le \rho^2 (f(x_{k})-f(x_\star))\\
-(f(x_{k+1})-f(x_\star)).
\end{multline}
\end{lem}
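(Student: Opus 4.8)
\noindent\emph{Proof proposal.}
The plan is to assemble \eqref{eq:GAMSupply1} and \eqref{eq:GAMSupply2} from three elementary inequalities — one coming from $L$-smoothness of $f$ at the point $y_k$, and two from $m$-strong convexity of $f$ applied between $y_k$ and $x_k$ and between $y_k$ and $x_\star$ — and then to obtain \eqref{supp} as the convex combination $\rho^2\cdot\eqref{eq:GAMSupply1}+(1-\rho^2)\cdot\eqref{eq:GAMSupply2}$, in the same spirit as the proof of Lemma~\ref{lem:SupplyNAG}. Since $X_i=\tilde X_i\otimes I_p$, it is enough to track everything at the level of the $3\times 3$ matrices $\tilde X_i$: writing $a\defeq x_k-x_\star$, $b\defeq x_{k-1}-x_\star$, $g\defeq\nabla f(y_k)$, so that $\bmat{\xi_k\\w_k}=\bmat{a\\b\\g}$ and $x_k-x_{k-1}=a-b$, each quadratic form $\bmat{\xi_k\\w_k}^\tp X_i\bmat{\xi_k\\w_k}$ expands into a fixed combination of $\norm a^2,\norm b^2,\norm g^2,a^\tp b,a^\tp g,b^\tp g$ with coefficients read off from $\tilde X_i$.

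First I would put the update in a form adapted to $y_k$. Because $y_k=(1+\eta)x_k-\eta x_{k-1}$ and $(1+\beta)x_k-\beta x_{k-1}=y_k+\delta(x_k-x_{k-1})$ with $\delta=\beta-\eta$, the first line of \eqref{eq:GAM} reads $x_{k+1}-y_k=\delta(a-b)-\alpha g$. The other two identities I need are $x_k-y_k=-\eta(a-b)$ and $y_k-x_\star=(1+\eta)a-\eta b$. These three substitutions are the only places where the coefficients $\alpha,\beta,\eta$ enter.

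The three building blocks are then: \textbf{(i)} $L$-smoothness at $y_k$, $f(x_{k+1})\le f(y_k)+g^\tp(x_{k+1}-y_k)+\tfrac{L}{2}\norm{x_{k+1}-y_k}^2$; substituting $x_{k+1}-y_k=\delta(a-b)-\alpha g$ and moving $f(y_k)$ to the left, the right-hand side becomes exactly $\bmat{a\\b\\g}^\tp X_1\bmat{a\\b\\g}$, so $f(y_k)-f(x_{k+1})\ge\bmat{a\\b\\g}^\tp X_1\bmat{a\\b\\g}$. \textbf{(ii)} strong convexity between $x_k$ and $y_k$, $f(x_k)\ge f(y_k)+g^\tp(x_k-y_k)+\tfrac m2\norm{x_k-y_k}^2$; with $x_k-y_k=-\eta(a-b)$ the right-hand side is $\bmat{a\\b\\g}^\tp X_2\bmat{a\\b\\g}$, so $f(x_k)-f(y_k)\ge\bmat{a\\b\\g}^\tp X_2\bmat{a\\b\\g}$. \textbf{(iii)} strong convexity between $x_\star$ and $y_k$; with $y_k-x_\star=(1+\eta)a-\eta b$ one gets $f(x_\star)-f(y_k)\ge\bmat{a\\b\\g}^\tp X_3\bmat{a\\b\\g}$. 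Adding (i) and (ii) cancels the $f(y_k)$ terms and gives \eqref{eq:GAMSupply1}; adding (i) and (iii) gives \eqref{eq:GAMSupply2}. Finally, $\rho^2\cdot\eqref{eq:GAMSupply1}+(1-\rho^2)\cdot\eqref{eq:GAMSupply2}$ has left-hand matrix $\rho^2(X_1+X_2)+(1-\rho^2)(X_1+X_3)=X_1+\rho^2X_2+(1-\rho^2)X_3=X$ and right-hand side $\rho^2(f(x_k)-f(x_{k+1}))+(1-\rho^2)(f(x_\star)-f(x_{k+1}))=\rho^2(f(x_k)-f(x_\star))-(f(x_{k+1})-f(x_\star))$, which is precisely \eqref{supp}.

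The only genuine work is the algebra in (i)--(iii): expanding each quadratic after the substitutions and verifying that the collected coefficients reproduce the entries of $\tilde X_1,\tilde X_2,\tilde X_3$ written in the statement. I expect the easiest slips to be the signs of the off-diagonal entry $-(1-L\alpha)\delta$ in $\tilde X_1$ and of $-(1+\eta)$ in $\tilde X_3$, so I would verify those terms explicitly. A convenient consistency check: taking $\eta=\beta$ forces $\delta=0$ and should collapse $\tilde X_1+\tilde X_2$ and $\tilde X_1+\tilde X_3$ to the matrices $\tilde X_1$ and $\tilde X_2$ of Lemma~\ref{lem:SupplyNAG}, recovering that lemma as a special case.
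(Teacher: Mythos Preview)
Your proposal is correct and follows essentially the same route as the paper's own proof: the paper likewise derives three building-block inequalities—$L$-smoothness applied to the pair $(x_{k+1},y_k)$ to produce the $\tilde X_1$ quadratic, and $m$-strong convexity applied to $(x_k,y_k)$ and $(x_\star,y_k)$ to produce $\tilde X_2$ and $\tilde X_3$—then adds them pairwise to obtain \eqref{eq:GAMSupply1}--\eqref{eq:GAMSupply2} and takes the $\rho^2$/$(1-\rho^2)$ combination for \eqref{supp}. Your substitutions $x_{k+1}-y_k=\delta(a-b)-\alpha g$, $x_k-y_k=-\eta(a-b)$, $y_k-x_\star=(1+\eta)a-\eta b$ and the consistency check $\eta=\beta$ are exactly the right bookkeeping.
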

\begin{proof}
A detailed proof is presented in the supplementary material. One mainly needs to modify the proof by taking the difference between $\beta$ and $\eta$ into accounts.
\end{proof}

Based the supply rate~\eqref{supp}, we can immediately modify Theorem \ref{thm:theNAG} to handle the more general algorithm \eqref{eq:GAM}. 
Although we do not have general analytical formulas for the convergence rate of \eqref{eq:GAM}, preliminary numerical results suggest that there are a family of $(\alpha, \beta, \eta)$ leading to the rate $\rho^2=1-\sqrt{\frac{m}{L}}$, and the required value of $\tilde{P}$ is quite different from  \eqref{eq:PNAG}. This indicates that our proposed LMI approach could go beyond the Lyapunov function~\eqref{eq:PNAG}.

\begin{rem}
It is noted in \citet[\S3.2]{Lessard2014} that searching over combinations of multiple IQCs may yield improved rate bounds. The same is true of supply rates. For example, we could include $\lambda_1,\lambda_2 \ge 0$ as decision variables and search for a dissipation inequality with supply rate $\lambda_1 S_1 + \lambda_2 S_2$ where e.g. $S_1$ is~\eqref{eq:supply1} and $S_2$ is~\eqref{supp}.
\end{rem}

\section{Dissipativity for Sublinear Rates}

The LMI approach in \cite{Lessard2014} is tailored for the analysis of linear convergence rates for algorithms that are time-invariant (the $A$ and $B$ matrices in~\eqref{eq:sys1} do not change with $k$).
We now show that dissipativity theory can be used to analyze the sublinear rates $O(1/k)$ and $O(1/k^2)$ via slight modifications of the dissipation inequality.

\subsection{Dissipativity for $O(1/k)$ rates}

The $O(1/k)$ modification, which we present first, is very similar to the linear rate result.
 
\begin{thm}\label{thm6}
Suppose $f$ has a finite minimum $f_\star$.
Consider the LTI system \eqref{eq:sys1} with a supply rate satisfying
\begin{align}
\label{eq:supplySub1}
S(\xi_k, w_k)\le -(f(z_k)-f_\star)
\end{align}
for some sequence $\{z_k\}$.
If there exists a nonnegative storage function $V$ such that the dissipation inequality~\eqref{eq:DI} holds over  all trajectories of $(\xi_k, w_k)$, then the following inequality holds over all trajectories as well.
\begin{align}
\label{eq:sub1}
\sum_{k=0}^T (f(z_k)-f_\star)\le V(\xi_0).
\end{align}
In addition, we have the sublinear convergence rate
\begin{align}
\label{eq:subR1}
\min_{k:k\le T} (f(z_k)-f_\star)\le \frac{V(\xi_0)}{T+1}.
\end{align}
If $f(z_{k+1})\le f(z_k)$ for all $k$, then \eqref{eq:subR1} implies that\\ $f(z_k)-f_\star\le \frac{V(\xi_0)}{k+1}$ for all $k$.
\end{thm}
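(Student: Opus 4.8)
The plan is to exploit the dissipation inequality directly via a telescoping sum, mirroring the standard argument that turns an exponential dissipation inequality into a linear rate, but now in the ``unweighted'' ($\rho=1$) regime, where telescoping produces a running sum rather than a geometric series.

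First I would combine the dissipation inequality~\eqref{eq:DI} with the supply-rate hypothesis~\eqref{eq:supplySub1} to obtain, for every $k$,
\begin{align*}
V(\xi_{k+1}) - V(\xi_k) \le S(\xi_k, w_k) \le -(f(z_k) - f_\star).
\end{align*}
Summing this inequality over $k = 0, 1, \dots, T$ makes the left-hand side telescope, giving $V(\xi_{T+1}) - V(\xi_0) \le -\sum_{k=0}^{T} (f(z_k) - f_\star)$. Rearranging and invoking nonnegativity of the storage function, $V(\xi_{T+1}) \ge 0$, yields $\sum_{k=0}^{T} (f(z_k) - f_\star) \le V(\xi_0) - V(\xi_{T+1}) \le V(\xi_0)$, which is exactly~\eqref{eq:sub1}.

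Next, to obtain~\eqref{eq:subR1}, I would use that $f_\star$ is the (finite) minimum of $f$, so every summand satisfies $f(z_k) - f_\star \ge 0$; hence each term is at least as large as the smallest one, and $(T+1)\min_{k:k\le T}(f(z_k) - f_\star) \le \sum_{k=0}^{T} (f(z_k) - f_\star) \le V(\xi_0)$. Dividing by $T+1$ gives~\eqref{eq:subR1}. Finally, for the monotone case $f(z_{k+1}) \le f(z_k)$: monotonicity forces the minimum over $\{0,\dots,k\}$ to be attained at the last index, so $\min_{j:j\le k}(f(z_j)-f_\star) = f(z_k) - f_\star$; applying~\eqref{eq:subR1} with $T$ replaced by $k$ then gives $f(z_k) - f_\star \le V(\xi_0)/(k+1)$ for all $k$.

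There is no substantive obstacle here; the argument is a two-line telescoping computation plus bookkeeping. The only points requiring care are (i) remembering that nonnegativity of $V$ is precisely what lets us discard the $-V(\xi_{T+1})$ term, and (ii) using finiteness and optimality of $f_\star$ to guarantee the summands are nonnegative, so that the ``minimum $\le$ average'' step and the monotone-case conclusion are both valid.
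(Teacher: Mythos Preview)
Your proposal is correct and follows essentially the same approach as the paper: combine~\eqref{eq:DI} with~\eqref{eq:supplySub1}, telescope over $k=0,\dots,T$, and discard $V(\xi_{T+1})\ge 0$ to obtain~\eqref{eq:sub1}; the remaining bounds follow from the nonnegativity of the summands and monotonicity. If anything, your write-up is more explicit than the paper's, which simply states that summing and using $V\ge 0$ ``yields the desired result.''
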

\begin{proof}
By the supply rate condition \eqref{eq:supplySub1} and the dissipation inequality \eqref{eq:DI}, we immediately get
\begin{align*}
V(\xi_{k+1})-V(\xi_k)+f(z_k)-f_\star\le 0.
\end{align*}
Summing the above inequality from $k=0$ to $T$ and using $V\ge 0$ yields the desired result.
\end{proof}
To address the sublinear rate analysis, the critical step is to choose an appropriate supply rate. If $f$ is $L$-smooth and convex, this is easily done. Consider the gradient method~\eqref{eq:gradient_descent} and define the quantities $\xi_k\defeq x_k-x_\star$, $A\defeq I_p$, and $B\defeq -\alpha I_p$ as in Section~\ref{sec:grad1}. Since  $f$ is $L$-smooth and convex, define the quadratic supply rate
\begin{align*}
S(\xi_k, w_k)\defeq\bmat{\xi_k \\ w_k}^\tp \bmat{0_p & -\frac{1}{2}I_p \\ -\frac{1}{2}I_p & \frac{1}{2L}I_p}\bmat{ \xi_k \\ w_k},
\end{align*}
which satisfies $S(\xi_k, w_k)\le f_\star-f(x_k)$ for all $k$ (co-coercivity).
Then we can directly apply the LMI \eqref{eq:lmi1} with $\rho=1$ to construct the dissipation inequality. Setting $P=p\otimes I_p$  and defining the storage function as $V(\xi_k)\defeq p\norm{\xi_k}^2=p\norm{x_k-x_\star}^2$, the LMI \eqref{eq:lmi1} becomes
\begin{align*}
\left(\bmat{0 & -\alpha p \\ -\alpha p & \alpha^2 p}+\bmat{0 & \frac{1}{2} \\ \frac{1}{2} & -\frac{1}{2L}}\right)\otimes I_p \le 0,
\end{align*}
which is equivalent to
\begin{align}
\label{eq:GD1}
\bmat{0 & -\alpha p+\frac{1}{2} \\ -\alpha p+\frac{1}{2} & \alpha^2 p-\frac{1}{2L}} \le 0.
\end{align}
Due to the $(1,1)$ entry being zero, \eqref{eq:GD1} holds if and only if
\[
\left\{\begin{aligned}
-\alpha p + \tfrac{1}{2}  &= 0 \\
\alpha^2 p - \tfrac{1}{2L} &\le 0
\end{aligned}\right.
\quad\implies\quad
\left\{\begin{aligned}
p &= \tfrac{1}{2\alpha} \\
\alpha &\le \tfrac{1}{L}
\end{aligned}\right.
\]
We can choose $\alpha=\frac{1}{L}$ and 
the bound~\eqref{eq:subR1} becomes
\begin{equation*}
\min_{k\le T}(f(x_k)-f_\star) \le \frac{L\norm{x_0-x_\star}^2}{2(T+1)}.
\end{equation*}
Since gradient descent has monotonically nonincreasing iterates, that is $f(x_{k+1})\le f(x_k)$ for all $k$,  we immediately recover the standard $O(1/k)$ rate result.

\subsection{Dissipativity for $O(1/k^2)$ rates}

Certifying a $O(1/k)$ rate for the gradient method required solving a single LMI~\eqref{eq:GD1}. However, this is not the case for the $O(1/k^2)$ rate analysis of Nesterov's accelerated method. Nesterov's algorithm has parameters that depend on $k$ so the analysis is more involved. We will begin with the general case and then specialize to Nesterov's algorithm. Consider the dynamical system
\begin{align}
\label{eq:sys2}
\xi_{k+1}=A_k \xi_k+B_k w_k
\end{align}
The state matrix $A_k$ and input matrix $B_k$ change with the time step $k$, and hence \eqref{eq:sys2} is referred to as a ``linear time-varying" (LTV) system.
The analysis of LTV systems typically requires a time-dependent supply rate such as
\begin{align}
\label{eq:supplysub}
S_k(\xi_k, w_k)\defeq\bmat{\xi_k \\ w_k}^\tp X_k \bmat{\xi_k \\ w_k}
\end{align}
If there exists a sequence $\{P_k\}$ with $P_k \ge 0$ such that
\begin{align}
\label{eq:lmiLTV}
\bmat{A_k^\tp P_{k+1} A_k- P_k & A_k^\tp P_{k+1} B_k\\[1mm]
B_k^\tp P_{k+1} A_k & B_k^\tp P_{k+1} B_k}- X_k\le 0
\end{align}
for all $k$,
then we have $V_{k+1}(\xi_{k+1})- V_k(\xi_k)\le S_k(\xi_k, w_k)$ with the time-dependent storage function defined as $V_k(\xi_k)\defeq\xi_k^\tp P_k \xi_k$.
This is a standard approach for dissipation inequality constructions of  LTV systems and can be proved using the same proof technique in Theorem \ref{thm:DI}.
Note that we need \eqref{eq:lmiLTV} to simultaneously hold for all $k$. This leads to an infinite number of LMIs in general.

Now we consider Nesterov's accelerated method for a convex $L$-smooth objective function $f$ \citep{YEN03a}.
\begin{subequations}
\label{eq:NAGsub}
\begin{align}
x_{k+1}&=y_k-\alpha_k \nabla f(y_k), \\
y_k&=(1+\beta_k) x_k-\beta_k x_{k-1}.
\end{align}
\end{subequations}
It is known that \eqref{eq:NAGsub} achieves a rate of $O(1/k^2)$ when $\alpha_k\defeq 1/L$ and $\beta_k$ is defined recursively as follows.
\[
\zeta_{-1}=0, \quad \zeta_{k+1}=\frac{1+\sqrt{1+4\zeta_k^2}}{2}, \quad \beta_k=\frac{\zeta_{k-1}-1}{\zeta_{k}}.
\]
The sequence $\{\zeta_k\}$ satisfies $\zeta_{k}^2-\zeta_k=\zeta_{k-1}^2$.
We now present a dissipativity theory for the sublinear rate analysis of Nesterov's accelerated method. 
Rewrite \eqref{eq:NAGsub} as 
\begin{align}
\label{eq:fdNAGsub}
\begin{split}
\bmat{x_{k+1}-x_\star \\ x_k-x_\star}=A_k\bmat{x_k-x_\star \\ x_{k-1}-x_\star}+B_kw_k
\end{split}
\end{align}
where $w_k\defeq\nabla f(y_k)=\nabla f\left((1+\beta_k)x_k-\beta_k x_{k-1}\right)$, $A_k\defeq\tilde{A}_k\otimes I_p$, $B_k\defeq\tilde{B}_k\otimes I_p$, and $\tilde{A}_k, \tilde{B}_k$ are given~by
\begin{align*}
\tilde{A}_k\defeq\bmat{1+\beta_k & -\beta_k \\ 1 & 0}, \,\quad \tilde{B}_k\defeq\bmat{-\alpha_k \\ 0}.
\end{align*}
Hence, Nesterov's accelerated method \eqref{eq:NAGsub} is in the form of \eqref{eq:sys2} with $\xi_k\defeq\bmat{(x_k-x_\star)^\tp & (x_{k-1}-x_\star)^\tp}^\tp$. The $O(1/k^2)$ rate analysis of Nesterov's method \eqref{eq:NAGsub} requires the following time-dependent supply rate.

\begin{lem}
\label{lem:SupplyNAGsub}
Let $f$ be $L$-smooth and convex. Let  $x_\star$ be a point satisfying $\nabla f(x_\star)=~0$. In addition, set $f_\star:=f(x_\star)$.
Consider Nesterov's method \eqref{eq:NAGsub} or equivalently \eqref{eq:fdNAGsub}. The following inequalities hold for all trajectories and for all $k$.
\begin{align*}
\bmat{x_k-x_\star\\x_{k-1}-x_\star \\ \nabla f(y_k)}^\tp M_ k \bmat{x_k-x_\star\\x_{k-1}-x_\star \\ \nabla f(y_k) } &\le f(x_k)-f(x_{k+1})\\
\bmat{x_k-x_\star\\x_{k-1}-x_\star \\ \nabla f(y_k)}^\tp N_k  \bmat{x_k-x_\star\\x_{k-1}-x_\star \\ \nabla f(y_k) } &\le f(x_\star)-f(x_{k+1})
\end{align*}
where $M_k\defeq\tilde{M}_k\otimes I_p$, $N_k\defeq\tilde{N}_k\otimes I_p$, and $\tilde{M}_k, \tilde{N}_k$ are defined by
\begin{align}
\label{eq:NAGsubM}
\tilde{M}_k&\defeq\bmat{0& 0 & -\frac{1}{2}\beta_k\\[1mm] 0  & 0 & \frac{1}{2}\beta_k\\[1mm] -\frac{1}{2}\beta_k & \frac{1}{2}\beta_k & \frac{1}{2L}}, \\
\label{eq:NAGsubN}
\tilde{N}_k&\defeq\bmat{0& 0 & -\frac{1}{2}(1+\beta_k)\\[1mm] 0  & 0 & \frac{1}{2}\beta_k\\[1mm] -\frac{1}{2}(1+\beta_k) & \frac{1}{2}\beta_k & \frac{1}{2L}}.
\end{align}
Given any nondecreasing sequence $\{\mu_k\}$, one can define the supply rate as \eqref{eq:supplysub} with the particular choice $X_k\defeq\mu_k M_k+(\mu_{k+1}-\mu_k) N_k$ for all $k$. Then this supply rate satisfies the condition
\begin{multline}
\label{eq:NAGSupplysub}
S(\xi_k, w_k)\le \mu_k (f(x_k)-f_\star)\\
-\mu_{k+1}(f(x_{k+1})-f_\star).
\end{multline}
\end{lem}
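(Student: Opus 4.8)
The plan is to establish the two matrix inequalities first and then assemble the supply-rate bound by taking the appropriate nonnegative combination. The two inequalities have the structure ``quadratic form in $(x_k-x_\star, x_{k-1}-x_\star, \nabla f(y_k))$ is bounded above by a difference of function values,'' which is exactly the kind of statement that follows from combining the basic inequalities defining the function class ($L$-smoothness and convexity) evaluated at well-chosen points, together with the algebraic relation imposed by the algorithm's update rule.

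First I would write out the consequences of $L$-smoothness and convexity that involve $y_k$ and the update $x_{k+1}=y_k-\alpha_k\nabla f(y_k)$ with $\alpha_k=1/L$. The key smoothness consequence is the descent-type inequality $f(x_{k+1})\le f(y_k)-\frac{1}{2L}\|\nabla f(y_k)\|^2$ (using $\alpha_k=1/L$), and the key convexity consequences are $f(y_k)\le f(x_k)+\nabla f(y_k)^\tp(y_k-x_k)$ and $f(y_k)\le f(x_\star)+\nabla f(y_k)^\tp(y_k-x_\star)$. Then I would substitute $y_k-x_k=\beta_k(x_k-x_{k-1})$ and $y_k-x_\star=(1+\beta_k)(x_k-x_\star)-\beta_k(x_{k-1}-x_\star)$ to rewrite these inner products as linear forms in the state and $\nabla f(y_k)$. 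Adding the descent inequality to the first convexity inequality and rearranging yields $f(x_k)-f(x_{k+1})\ge$ (a quadratic form), and one checks directly that this quadratic form has exactly the matrix representation $\tilde M_k\otimes I_p$ in \eqref{eq:NAGsubM}; similarly, adding the descent inequality to the second convexity inequality gives the $\tilde N_k$ statement in \eqref{eq:NAGsubN}. This is essentially the computation behind (3.23)–(3.24) in \citet{bubeck2015}, specialized to $m=0$, so I expect it to go through cleanly; the main bookkeeping obstacle is just verifying that the coefficients match the stated matrices, which can be done entrywise.

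Second, given these two inequalities, I would multiply the $M_k$-inequality by $\mu_k\ge 0$ and the $N_k$-inequality by $\mu_{k+1}-\mu_k\ge 0$ (nonnegative since $\{\mu_k\}$ is nondecreasing) and add them. Both multipliers are nonnegative, so the inequalities are preserved, and the left-hand side becomes the quadratic form with matrix $X_k=\mu_k M_k+(\mu_{k+1}-\mu_k)N_k$, i.e.\ exactly $S(\xi_k,w_k)$ as defined by \eqref{eq:supplysub}. The right-hand side becomes
$\mu_k(f(x_k)-f(x_{k+1}))+(\mu_{k+1}-\mu_k)(f(x_\star)-f(x_{k+1}))$,
and collecting terms gives $\mu_k(f(x_k)-f_\star)-\mu_{k+1}(f(x_{k+1})-f_\star)$ after using $f_\star=f(x_\star)$. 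This is precisely \eqref{eq:NAGSupplysub}. The only genuine subtlety is that $\{\mu_k\}$ must be nondecreasing for the second multiplier to be nonnegative; this hypothesis is exactly what the lemma assumes, so no further work is needed there.

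I expect the main obstacle to be purely the entrywise verification that the quadratic forms produced by the descent/convexity combinations coincide with $\tilde M_k$ and $\tilde N_k$ as written — in particular getting the factors of $\frac{1}{2}$, the signs on the $\beta_k$ entries, and the $\frac{1}{2L}$ term in the $(3,3)$ slot right. Everything else is a convexity-combination argument with nonnegative weights. I would relegate the full entrywise check to the supplementary material (as the paper does for Lemmas~\ref{lem:SupplyNAG} and~\ref{lem:SupplyGAM}) and present the structure above as the proof sketch.
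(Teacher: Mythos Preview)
Your proposal is correct and follows essentially the same route as the paper's proof: combine the descent inequality from $L$-smoothness (equivalently, \eqref{e1b} applied to $(x_{k+1},y_k)$ with $\alpha_k=1/L$) with convexity (\eqref{e1a} with $m=0$) applied at $(x_k,y_k)$ and at $(x_\star,y_k)$, substitute $y_k=(1+\beta_k)x_k-\beta_k x_{k-1}$ to obtain the $\tilde M_k$ and $\tilde N_k$ quadratic forms, and then take the nonnegative combination with weights $\mu_k$ and $\mu_{k+1}-\mu_k$ to get \eqref{eq:NAGSupplysub}. This is exactly the specialization of the Lemma~\ref{lem:SupplyNAG} argument to $m=0$, $\alpha_k=1/L$ that the paper indicates.
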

\begin{proof}
The proof is very similar to the proof of Lemma \ref{lem:SupplyNAG} with an extra condition $m=0$. A detailed proof is presented in the supplementary material.
\end{proof}

\begin{thm}
\label{thm:DINAGsub}
Consider the LTV dynamical system \eqref{eq:sys2}.
If there exist matrices $\{P_k\}$ with $P_k \ge 0$ and a nondecreasing sequence of nonnegative scalars $\{\mu_k\}$ such that
\begin{multline}\label{eq:lmiNAGsub}
\bmat{A_k^\tp P_{k+1} A_k- P_k &  A_k^\tp P_{k+1} B_k\\[1mm]
B_k^\tp P_{k+1} A_k & B_k^\tp P_{k+1} B_k}\\
-\mu_k M_k -(\mu_{k+1}-\mu_k) N_k \le 0
\end{multline}
then we have $V_{k+1}(\xi_{k+1})-V_k(\xi_k)\le S_k(\xi_k, w_k)$ with the storage function $V_k(\xi_{k})\defeq\xi_k^\tp P_k \xi_k$ and the supply rate \eqref{eq:supplysub} using $X_k\defeq\mu_k M_k+(\mu_{k+1}-\mu_k) N_k$ for all $k$.
In addition, if this supply rate satisfies \eqref{eq:NAGSupplysub}, we have 
\begin{align}
\label{eq:subNAGfinal}
f(x_k)-f_\star\le \frac{\mu_0(f(x_0)-f_\star)+V_0(\xi_0)}{\mu_k}
\end{align}
\end{thm}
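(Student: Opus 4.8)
The plan is to prove the two assertions in turn: first, that the LMI~\eqref{eq:lmiNAGsub} forces the time-varying dissipation inequality, and second, that chaining this inequality with the supply-rate bound~\eqref{eq:NAGSupplysub} and telescoping yields the $O(1/\mu_k)$ bound~\eqref{eq:subNAGfinal}.

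For the first assertion I would repeat the computation in the proof of Theorem~\ref{thm:DI}, carrying the time index along. From the LTV recursion~\eqref{eq:sys2}, $V_{k+1}(\xi_{k+1})=\xi_{k+1}^\tp P_{k+1}\xi_{k+1}=(A_k\xi_k+B_kw_k)^\tp P_{k+1}(A_k\xi_k+B_kw_k)$, which equals $\bmat{\xi_k\\ w_k}^\tp \bmat{A_k^\tp P_{k+1}A_k & A_k^\tp P_{k+1}B_k\\ B_k^\tp P_{k+1}A_k & B_k^\tp P_{k+1}B_k}\bmat{\xi_k\\ w_k}$. Left- and right-multiplying~\eqref{eq:lmiNAGsub} by $\bmat{\xi_k^\tp & w_k^\tp}$ and its transpose, and identifying $X_k\defeq\mu_k M_k+(\mu_{k+1}-\mu_k)N_k$ together with $V_k(\xi_k)\defeq\xi_k^\tp P_k\xi_k$, gives $V_{k+1}(\xi_{k+1})-V_k(\xi_k)\le S_k(\xi_k,w_k)$ for all $k$. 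This is precisely the LTV dissipation inequality already described in the text around~\eqref{eq:lmiLTV}.

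For the second assertion, suppose in addition that $S_k$ obeys~\eqref{eq:NAGSupplysub}. Combining the two inequalities yields $V_{k+1}(\xi_{k+1})-V_k(\xi_k)\le \mu_k(f(x_k)-f_\star)-\mu_{k+1}(f(x_{k+1})-f_\star)$, i.e.\ the scalar sequence $E_k\defeq V_k(\xi_k)+\mu_k(f(x_k)-f_\star)$ is nonincreasing in $k$. Telescoping from $0$ to $k$ gives $E_k\le E_0$, that is, $V_k(\xi_k)+\mu_k(f(x_k)-f_\star)\le V_0(\xi_0)+\mu_0(f(x_0)-f_\star)$. Since $P_k\ge 0$ we have $V_k(\xi_k)\ge 0$, so $\mu_k(f(x_k)-f_\star)\le V_0(\xi_0)+\mu_0(f(x_0)-f_\star)$, and dividing by $\mu_k$ (positive for the relevant $k$) produces~\eqref{eq:subNAGfinal}.

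The argument is essentially bookkeeping, so I expect no real obstacle. The only points that need a word of care are: $V_k(\xi_k)\ge 0$ relies on $P_k\ge 0$; forming $E_k$ and concluding it is nonincreasing uses nonnegativity of the weights, while the \emph{nondecreasing} hypothesis on $\{\mu_k\}$ is actually consumed earlier, in Lemma~\ref{lem:SupplyNAGsub}, to guarantee~\eqref{eq:NAGSupplysub}; and the final division requires $\mu_k>0$. For Nesterov's method~\eqref{eq:NAGsub} one checks separately that the chosen $\mu_k$ grows like $k^2$, which is what turns~\eqref{eq:subNAGfinal} into the claimed $O(1/k^2)$ rate.
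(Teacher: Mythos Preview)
Your proposal is correct and follows essentially the same route as the paper's own proof: left-/right-multiply the LMI by $\bmat{\xi_k^\tp & w_k^\tp}$ to obtain the time-varying dissipation inequality, combine with~\eqref{eq:NAGSupplysub} to see that $V_k(\xi_k)+\mu_k(f(x_k)-f_\star)$ is nonincreasing, and then drop $V_k(\xi_k)\ge 0$ using $P_k\ge 0$. Your added remarks on where the monotonicity of $\{\mu_k\}$ is actually consumed and on the need for $\mu_k>0$ are accurate refinements but do not change the argument.
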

\begin{proof}
Based on the state-space model \eqref{eq:sys2}, we can left and
right multiply \eqref{eq:lmiNAGsub}  by $\bmat{\xi_k^\tp & w_k^\tp}$ and $\bmat{\xi_k^\tp & w_k^\tp}^\tp$, and directly obtain the dissipation inequality. Combining this dissipation inequality with \eqref{eq:NAGSupplysub}, we can show
\[
 V_{k+1}(\xi_{k+1})  + \mu_{k+1}(f_{k+1}-f_\star) \le V_k(\xi_k)+\mu_k (f_{k}-f_\star). 
\]
Summing the above inequality as in the proof of Theorem~\ref{thm6} and using the fact that $P_k \ge 0$ for all $k$ yields the result.
\end{proof}

We are now ready to show the $O(1/k^2)$ rate result for Nesterov's accelerated method. Set $\mu_k\defeq(\zeta_{k-1})^2$ and $P_k\defeq\frac{L}{2}\bmat{\zeta_{k-1} \\ 1-\zeta_{k-1}}\bmat{\zeta_{k-1} & 1-\zeta_{k-1}}$. Note that $P_k \ge 0$ and $\mu_{k+1}-\mu_k=\zeta_k$. It is straightforward to verify that this choice of $\{P_k, \mu_k\}$ makes the left side of \eqref{eq:lmiNAGsub} the zero matrix and hence \eqref{eq:subNAGfinal} holds. Using the fact that $\zeta_{k-1}\ge k/2$ (easily proved by induction), we have $\mu_k \ge k^2/4$ and the $O(1/k^2)$ rate for Nesterov's method follows.

\begin{rem}
\label{rem:unify}
Theorem \ref{thm:DINAGsub} is quite general. The infinite family of LMIs \eqref{eq:lmiNAGsub} can also be applied for linear rate analysis and collapses down to the single LMI \eqref{eq:lmi1} in that case. To apply \eqref{eq:lmiNAGsub} to linear rate analysis, one needs to slightly modify \eqref{eq:NAGsubM}--\eqref{eq:NAGsubN} such that the strong convexity parameter $m$ is incorporated into the formulas of $M_k, N_k$. By setting $\mu_k\defeq\rho^{-2k}$ and $P_k\defeq\rho^{-2k} P$, then the LMI \eqref{eq:lmiNAGsub} is the same for all $k$ and we recover \eqref{eq:lmi1}. This illustrates how the infinite number of LMIs \eqref{eq:lmiNAGsub} can collapse to a single LMI under special circumstances.
\end{rem}

\section{Continuous-time Dissipation Inequality}
Finally, we briefly discuss dissipativity theory for the continuous-time ODEs used in optimization research. Note that dissipativity theory was first introduced in \citet{willems72a, willems72b} in the context of continuous-time systems. We denote continuous-time variables in upper case.
Consider a continuous-time state-space model
\begin{align}
\label{eq:sys3}
\dot{\Lambda}(t)=A(t)\Lambda(t)+B(t)W(t)
\end{align}
where $\Lambda(t)$ is the state, $W(t)$ is the input, and $\dot{\Lambda}(t)$ denotes the time derivative of $\Lambda(t)$.
In continuous-time, the supply rate is a function $S:\R^{n_\Lambda}\times \R^{n_W}\times\R_+\to \R$ that assigns a scalar to each possible state and input pair. Here, we allow $S$ to also depend on time $t \in \R_+$. To simplify our exposition, we will omit the explicit time dependence $(t)$ from our notation.

\begin{defn}
The dynamical system \eqref{eq:sys3} is dissipative with respect to the supply rate $S$ if there exists a function $V:\R^{n_\Lambda}\times \R_+\to\R_+$ such that $V(\Lambda,t) \ge 0$ for all $\Lambda\in\R^{n_\Lambda}$ and $t\ge 0$ and
\begin{align}
\label{eq:DIcon}
\dot{V}(\Lambda,t)\le S(\Lambda, W,t)
\end{align}
for every trajectory of \eqref{eq:sys3}. Here, $\dot V$ denotes the Lie derivative (or total derivative); it accounts for $\Lambda$'s dependence on $t$. The function $V$ is called a storage function, and \eqref{eq:DIcon} is a (continuous-time) dissipation inequality.
\end{defn}

For any given quadratic supply rate,  one can automatically construct the continuous-time dissipation inequality using semidefinite programs. The following result is standard in the controls literature.

\begin{thm}
\label{thm:DIcon}
Suppose $X(t)\in \R^{(n_{\Lambda}+n_W)\times (n_{\Lambda}+n_W)}$ and $X(t)^\tp\! =\! X(t)$ for all $t$. Consider the quadratic supply rate
\begin{align}
\label{eq:supplycon}
S(\Lambda, W,t) \defeq \bmat{\Lambda \\ W}^\tp X \bmat{\Lambda \\ W}
\quad\text{for all }t.
\end{align}
If there exists a family of matrices $P(t)\in \R^{n_{\Lambda}\times n_{\Lambda}}$ with $P(t)\ge 0$ such that
\begin{align}
\label{eq:lmicon}
\bmat{A^\tp P +P A +\dot{P} & P B \\ B^\tp P  & 0 }-X\le 0
\quad\text{for all }t.
\end{align}
Then we have $\dot{V}(\Lambda,t)\le S(\Lambda, W,t)$ with the storage function defined as $V(\Lambda,t)\defeq\Lambda^\tp P \Lambda$.
\end{thm}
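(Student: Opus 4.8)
The plan is to mimic the discrete-time argument of Theorem~\ref{thm:DI}, replacing the one-step difference $V(\xi_{k+1})-\rho^2 V(\xi_k)$ by the Lie derivative $\dot V(\Lambda,t)$. First I would write out $\dot V$ explicitly: since $V(\Lambda,t)=\Lambda^\tp P(t)\Lambda$ and $\Lambda$ evolves according to \eqref{eq:sys3}, the chain rule gives
\begin{align*}
\dot V(\Lambda,t) &= \dot\Lambda^\tp P \Lambda + \Lambda^\tp P \dot\Lambda + \Lambda^\tp \dot P \Lambda\\
&= (A\Lambda+BW)^\tp P\Lambda + \Lambda^\tp P (A\Lambda+BW) + \Lambda^\tp \dot P \Lambda.
\end{align*}

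The second step is to collect this into a single quadratic form in the stacked vector $\bmat{\Lambda^\tp & W^\tp}^\tp$. Grouping terms, the $\Lambda^\tp(\cdot)\Lambda$ part contributes $A^\tp P + PA + \dot P$, the cross terms $\Lambda^\tp(\cdot)W$ and $W^\tp(\cdot)\Lambda$ contribute $PB$ and $B^\tp P$ respectively, and there is no $W^\tp(\cdot)W$ term, giving
\begin{align*}
\dot V(\Lambda,t) = \bmat{\Lambda \\ W}^\tp \bmat{A^\tp P +P A +\dot{P} & P B \\ B^\tp P & 0} \bmat{\Lambda \\ W}.
\end{align*}

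The third step is to invoke the hypothesis \eqref{eq:lmicon}: left- and right-multiplying that matrix inequality by $\bmat{\Lambda^\tp & W^\tp}$ and its transpose preserves the inequality, yielding $\dot V(\Lambda,t) - S(\Lambda,W,t) \le 0$, i.e. the dissipation inequality \eqref{eq:DIcon}. Nonnegativity of $V$ is immediate from $P(t)\ge 0$. I would note this holds along every trajectory of \eqref{eq:sys3} and for every $t$, since \eqref{eq:lmicon} is assumed to hold for all $t$.

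There is no real obstacle here — the result is the continuous-time analogue of Theorem~\ref{thm:DI} and the proof is essentially a one-line computation once $\dot V$ is expanded. The only point requiring minor care is the correct appearance of the $\dot P$ term in the $(1,1)$ block (this is the genuinely continuous-time feature, absent in the discrete case where the time-varying $P_k$ shows up instead as $A_k^\tp P_{k+1} A_k - P_k$) and the fact that the $(2,2)$ block is exactly $0$ rather than something like $B^\tp P B$, because $W$ enters $\dot\Lambda$ linearly and $\dot V$ only through the cross terms. I would also remark that, as in the discrete case, \eqref{eq:lmicon} is an LMI in $P(t)$ (and $\dot P(t)$) for each fixed $t$.
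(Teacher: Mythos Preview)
Your proposal is correct and matches the paper's own proof essentially line for line: expand $\dot V$ via the product rule, collect into the quadratic form with kernel $\bmat{A^\tp P + PA + \dot P & PB \\ B^\tp P & 0}$, then left- and right-multiply \eqref{eq:lmicon} by the stacked vector. Your added remarks on the $\dot P$ term and the vanishing $(2,2)$ block are accurate but go beyond what the paper records.
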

\begin{proof}
Based on the state-space model \eqref{eq:sys3}, we can apply the product rule for total derivatives and obtain
\begin{align*}
\dot{V}(\Lambda,t)&=\dot{\Lambda}^\tp P \Lambda+\Lambda^\tp P \dot{\Lambda}+\Lambda^\tp \dot{P} \Lambda\\
&=\bmat{\Lambda \\ W}^\tp \bmat{A^\tp P +P A +\dot{P} & P B \\ B^\tp P  & 0 }\bmat{\Lambda \\ W}.
\end{align*}
Hence we can left and right multiply \eqref{eq:lmicon}  by $\bmat{\Lambda^\tp & W^\tp}$ and $\bmat{\Lambda^\tp & W^\tp}^\tp$ and obtain the desired conclusion.
\end{proof}
 
The algebraic structure of the LMI \eqref{eq:lmicon} is simpler than that of its discrete-time counterpart \eqref{eq:lmi1} because for given $P$, the continuous-time LMI is linear in $A,B$ rather than being quadratic. This may explain why continuous-time ODEs are sometimes more amenable to analytic approaches than their discretized counterparts.

We demonstrate the utility of \eqref{eq:lmicon} on the continuous-time limit of Nesterov's accelerated method in \citet{Su2016}:
\begin{align}\label{ode}
\ddot{Y}+\frac{3}{t} \dot{Y}+\nabla f(Y)=0,
\end{align}
which we rewrite as~\eqref{eq:sys3} with $\Lambda \defeq \bmat{\dot Y^\tp & Y^\tp-x_\star^\tp}^\tp$, $W\defeq\nabla f(Y)$, $x_\star$ is a point satisfying $\nabla f(x_\star)=0$, and $A,B$ are defined by
\[
A(t) \defeq \bmat{-\frac{3}{t}I_p & 0_p \\ I_p & 0_p},\quad
B(t) \defeq \bmat{-I_p\\ 0_p}.
\]
Suppose $f$ is convex and set $f_\star\defeq f(x_\star)$.
\citet[Theorem 3]{Su2016} constructs the Lyapunov function $\mathcal{V}(Y,t)\defeq t^2(f(Y)-f_\star)+2\norm{Y+\frac{t}{2}\dot{Y}-x_\star}^2$ to show that $\dot{\mathcal{V}}\le 0$ and then directly demonstrate a $O(1/t^2)$ rate for the ODE~\eqref{ode}. To illustrate the power of the dissipation inequality, we use the LMI \eqref{eq:lmicon} to recover this Lyapunov function.
Denote $G(Y,t) \defeq t^2(f(Y)-f_\star)$. 
Note that convexity implies $f(Y)-f_\star\le \nabla f(Y)^\tp(Y-x_\star)$, which we rewrite as
\begin{align*}
2t(f(Y)-f_\star)\le \bmat{\dot{Y} \\ Y-x_\star \\ W}^\tp\!\!
\bmat{0_p & 0_p & 0_p \\ 0_p & 0_p & tI_p\\ 0_p & tI_p & 0_p}\!\!
\bmat{\dot{Y} \\ Y-x_\star \\ W}\!.
\end{align*}
Since $\dot{G}(Y,t)=2t(f(Y)-f_\star)+t^2\nabla f(Y)^\tp \dot{Y}$, we have
\begin{align*}
\dot{G}\le \bmat{\dot{Y} \\ Y-x_\star \\ W}^\tp \bmat{0_p & 0_p & \frac{t^2}{2}I_p \\ 0_p & 0_p & tI_p\\ \frac{t^2}{2}I_p & tI_p & 0_p}\bmat{\dot{Y} \\ Y-x_\star \\ W}.
\end{align*}
Now choose the supply rate $S$ as \eqref{eq:supplycon} with $X(t)$ given by
\begin{align*}
X(t) \defeq - \bmat{0_p & 0_p & \frac{t^2}{2}I_p \\ 0_p & 0_p & tI_p\\ \frac{t^2}{2}I_p & tI_p & 0_p}.
\end{align*}
Clearly $S(\Lambda, W,t)\le -\dot{G}(Y,t)$. Now we can  choose $P(t)\defeq 2\bmat{\frac{t}{2}I_p & I_p}^\tp\bmat{\frac{t}{2}I_p & I_p}$.
Substituting $P$ and $X$ into \eqref{eq:lmicon}, 
the left side of \eqref{eq:lmicon} becomes identically zero. Therefore, $\dot{V}(\Lambda,t)\le S(\Lambda, W,t)\le -\dot{G}(Y,t)$ with the storage function $V(\Lambda,t)\defeq\Lambda^\tp P \Lambda$.
By defining the Lyapunov function $\mathcal{V}(Y,t)\defeq V(Y,t)+G(Y,t)$, we immediately obtain $\dot{\mathcal{V}}\le 0$ and also recover the same Lyapunov function used in \citet{Su2016}.

\begin{rem}
As in the discrete-time case, the infinite family of LMIs~\eqref{eq:lmicon} can also be reduced to a single LMI for the linear rate analysis of continuous-time ODEs. For further discussion on the topic of continuous-time exponential dissipation inequalities, see~\citet{Hu16}.
\end{rem}

\section{Conclusion and Future Work}
In this paper, we developed new notions of dissipativity theory for understanding of Nesterov's accelerated method. Our approach enjoys advantages of both the IQC framework \citep{Lessard2014} and the discretization approach \citep{wilson2016} in the sense that our proposed LMI condition is simple enough for analytical rate analysis of Nesterov's method and can also  be easily generalized to more complicated algorithms. Our approach also gives an intuitive interpretation of the convergence behavior of Nesterov's  method using an energy dissipation perspective.

One potential application of our dissipativity theory is for the design of accelerated methods that are robust to gradient noise. This is similar to the algorithm design work in \citet[\S 6]{Lessard2014}. However, compared with the IQC approach in \citet{Lessard2014}, our dissipativity theory leads to smaller LMIs. This can be beneficial since smaller LMIs are generally easier to solve analytically. In addition, the IQC approach in \citet{Lessard2014} is only applicable to strongly-convex objective functions while our dissipativity theory may facilitate the design of robust algorithm for weakly-convex objective functions. The dissipativity framework may also lead to the design of adaptive or time-varying algorithms.

\section*{Acknowledgements}

Both authors would like to thank the anonymous reviewers for helpful suggestions that improved the clarity and quality of the final manuscript.

This material is based upon work supported by the National Science Foundation under Grant No. 1656951. Both authors also acknowledge support from the Wisconsin Institute for Discovery, the College of Engineering, and the Department of Electrical and Computer Engineering at the University of Wisconsin--Madison.


\bibliography{IQCandSOS}
\bibliographystyle{icml2017}
\clearpage
\setcounter{section}{0}
\renewcommand{\thesection}{\Alph{section}}

\onecolumn
\begin{center}
{\Large\bf Supplementary Material}
\end{center}

We will make use of the following result throughout this section.
\begin{lemm}\label{lem:S1}
Suppose $f$ is $L$-smooth and $m$-strongly convex. Then for all $x,y$ the following inequalities hold.
\begin{subequations}\label{ee}
\begin{align}
\tag{S1}\label{e1a}
f(x)-f(y) &\ge \nabla f(y)^\tp (x-y) + \frac{m}{2}\norm{x-y}^2 \\
\tag{S2}\label{e1b}
f(y)-f(x) &\ge \nabla f(y)^\tp (y-x) - \frac{L}{2}\norm{y-x}^2
\end{align}
\end{subequations}
\end{lemm}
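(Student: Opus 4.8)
The plan is to handle the two inequalities separately, since each one isolates exactly one of the two defining properties of the function class. Inequality~\eqref{e1a} requires essentially no work: it is precisely the definition of $m$-strong convexity recorded in Section~2.1, applied to the pair $(x,y)$. So I would simply invoke that definition (and note that the $m=0$ case is the usual convexity inequality).

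The substance is in~\eqref{e1b}, which is the standard quadratic upper bound (the ``descent lemma'') associated with $L$-smoothness; note that it needs only the Lipschitz gradient condition, not convexity. I would derive it via the fundamental theorem of calculus along the segment from $y$ to $x$. Writing $g(t)\defeq f(y+t(x-y))$, differentiability of $f$ makes $g$ continuously differentiable on $[0,1]$, so $f(x)-f(y)=\int_0^1 \nabla f(y+t(x-y))^\tp(x-y)\,dt$, and hence
\[
f(x)-f(y)-\nabla f(y)^\tp(x-y)=\int_0^1 \bigl(\nabla f(y+t(x-y))-\nabla f(y)\bigr)^\tp(x-y)\,dt.
\]
Bounding the integrand by Cauchy--Schwarz and then by the Lipschitz property gives the pointwise bound $\bigl(\nabla f(y+t(x-y))-\nabla f(y)\bigr)^\tp(x-y)\le L t\norm{x-y}^2$, and integrating yields $f(x)-f(y)-\nabla f(y)^\tp(x-y)\le \tfrac{L}{2}\norm{x-y}^2$. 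Rearranging this as $f(y)-f(x)\ge \nabla f(y)^\tp(y-x)-\tfrac{L}{2}\norm{y-x}^2$ is exactly~\eqref{e1b}.

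There is no genuine obstacle here; the only mild point of care is the integral representation of $f(x)-f(y)$, which is justified by the assumed differentiability of $f$. If one prefers to avoid integrals, an alternative is to apply the mean value theorem to $h(t)\defeq f(y+t(x-y))-t\,\nabla f(y)^\tp(x-y)$ and estimate $h'(t)-h'(0)$ with the Lipschitz bound, which produces the same inequality. Either route is routine, so this proposition is really a bookkeeping step that packages the two one-sided bounds in the normalized form used by the supply-rate computations in the lemmas that follow.
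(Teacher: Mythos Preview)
Your proposal is correct and matches the paper's approach: the paper simply states that both inequalities follow from the definitions of $m$-strong convexity and $L$-smoothness. You are in fact more thorough, since \eqref{e1b} does not drop out literally from the Lipschitz-gradient definition in Section~2.1 without the standard integral (or mean-value) argument you supply; the paper leaves that step implicit.
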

\begin{proof}
These inequalities follow from the definitions of $L$-smoothness and $m$-strong convexity.
\end{proof}

\section{Proof of Lemma 3}

Applying \eqref{e1a} with $(x,y) \mapsto (x_k,y_k)$, we obtain
\begin{align*}
f(x_k)-f(y_k) \ge \nabla f(y_k)^\tp (x_k-y_k) + \frac{m}{2}\norm{x_k-y_k}^2.
\end{align*}
Applying \eqref{e1b} with $(x,y) \mapsto (y_k-\alpha\nabla f(y_k),y_k)$, we obtain
\begin{align*}
f(y_k)-f(y_k-\alpha\nabla f(y_k)) \ge \frac{\alpha}{2}(2-L\alpha) \norm{\nabla f(y_k)}^2.
\end{align*}
Summing these inequalities, we obtain:
\begin{align*}\label{e3}\tag{S3}
f(x_k)-f(y_k-\alpha\nabla f(y_k)) \ge \nabla f(y_k)^\tp (x_k-y_k)+\frac{m}{2}\norm{x_k-y_k}^2 +\frac{\alpha}{2}(2-L\alpha) \norm{\nabla f(y_k)}^2.
\end{align*}
Substituting $x_{k+1}=y_k-\alpha\nabla f(y_k)$ in the left-hand side of~\eqref{e3}, we can rewrite it as
\begin{equation}\label{e4}\tag{S4}
\frac{1}{2} \bmat{x_k-y_k \\ \nabla f(y_k)}^\tp \left(\bmat{ m  & 1 \\  1 & \alpha(2-L\alpha) } \otimes I_p\right) \bmat{x_k-y_k \\ \nabla f(y_k) } \le f(x_{k})-f(x_{k+1}).
\end{equation}
Substituting $y_k=(1+\beta)x_k-\beta x_{k-1}$ into~\eqref{e4}, we obtain
\begin{equation*}
\frac{1}{2}\bmat{x_k-x_\star\\x_{k-1}-x_\star \\ \nabla f(y_k)}^\tp\left(\bmat{ \beta^2 m & -\beta^2 m & -\beta\\ -\beta^2 m  & \beta^2 m & \beta\\ -\beta & \beta & \alpha(2-L\alpha)} \otimes I_p \right)\bmat{x_k-x_\star\\x_{k-1}-x_\star \\ \nabla f(y_k) } \le f(x_{k})-f(x_{k+1}),
\end{equation*}
which directly leads to the formulation of $\tilde{X}_1$ in Lemma 3.
Similarly, we apply \eqref{e1a} with $(x,y)\mapsto (x_\star, y_k)$ and obtain
\begin{equation*}
\frac{1}{2}\bmat{x_k-x_\star\\x_{k-1}-x_\star \\ \nabla f(y_k)}^\tp\left(\bmat{ (1+\beta)^2 m & -\beta(1+\beta) m & -(1+\beta)\\ -\beta(1+\beta) m  & \beta^2 m & \beta\\ -(1+\beta) & \beta & \alpha(2-L\alpha)}\otimes I_p\right)  \bmat{x_k-x_\star\\x_{k-1}-x_\star \\ \nabla f(y_k) } \le f(x_\star)-f(x_{k+1})
\end{equation*}
which directly leads to the formulation of $\tilde{X}_2$ in Lemma 3. The rest of the proof is straightforward. Actually,
we can choose $\tilde{X}\defeq\rho^2\tilde{X}_1+(1-\rho^2)\tilde{X}_2$ and we directly obtain
\begin{equation*}
\bmat{x_k-x_\star\\x_{k-1}-x_\star \\ \nabla f(y_k)}^\tp \left(\tilde{X}\otimes I_p\right) \bmat{x_k-x_\star\\x_{k-1}-x_\star \\ \nabla f(y_k) } \le -(f(x_{k+1})-f(x_\star))+\rho^2(f(x_k)-f(x_\star)).
\end{equation*}

Specifically, $\tilde{X}$ may be computed as
\begin{equation*}
\tilde{X}=\frac{1}{2}\bmat{ (1+\beta)^2 m-(1+2\beta)m\rho^2 & (\rho^2-1-\beta)\beta m & \rho^2-1-\beta\\ (\rho^2-1-\beta)\beta m & \beta^2 m& \beta\\ \rho^2-1-\beta & \beta & \alpha(2-L\alpha)} .
\end{equation*}\qedhere

\section{Proof of Lemma 5}

Applying~\eqref{e1b} with $(x,y)\mapsto (x_{k+1},y_k)$, and making the substitutions $x_{k+1}=(1+\beta)x_k-\beta x_{k-1}-\alpha\nabla f(y_k)$ and $y_k=(1+\eta)x_k-\eta x_{k-1}$, we obtain:
\begin{align}
&\hspace{-5mm}f(y_k) - f(x_{k+1})\ge \nabla f(y_k)^\tp(y_k-x_{k+1})-\frac{L}{2}\norm{x_{k+1}-y_k}^2 \notag\\
&=\nabla f(y_k)^\tp((\beta-\eta)(x_{k-1}-x_k)+\alpha \nabla f(y_k))-\frac{L}{2}\norm{(\beta-\eta)(x_{k-1}-x_k)+\alpha \nabla f(y_k)}^2 \notag\\
&=\frac{1}{2}\bmat{x_k-x_\star\\x_{k-1}-x_\star \\ \nabla f(y_k)}^\tp\left(\bmat{ -L(\beta-\eta)^2 & L(\beta-\eta)^2  & -(1-L\alpha)(\beta-\eta)\\ L(\beta-\eta)^2  & -L(\beta-\eta)^2 & (1-L\alpha)(\beta-\eta)\\ -(1-L\alpha)(\beta-\eta) & (1-L\alpha)(\beta-\eta) & \alpha(2-L\alpha)} \otimes I_p \right)\bmat{x_k-x_\star\\x_{k-1}-x_\star \\ \nabla f(y_k) }
\label{S5}\tag{S5}
\end{align}
Applying~\eqref{e1a} with $(x,y)\mapsto (x_k,y_k)$ and substituting   $y_k=(1+\eta)x_k-\eta x_{k-1}$, we obtain:
\begin{align}
f(x_k) - f(y_k) &\ge 
\nabla f(y_k)^\tp (x_k-y_k)+\frac{m}{2}\norm{x_k-y_k}^2 \notag \\
&=\eta \nabla f(y_k)^\tp(x_{k-1}-x_k)+\frac{m\eta^2}{2}\norm{x_{k-1}-x_k}^2 \notag\\
&=\frac{1}{2}\bmat{x_k-x_\star\\x_{k-1}-x_\star \\ \nabla f(y_k)}^\tp\left(\bmat{ \eta^2m & -\eta^2m  & -\eta\\ -\eta^2m & \eta^2m & \eta\\ -\eta & \eta & 0} \otimes I_p \right)\bmat{x_k-x_\star\\x_{k-1}-x_\star \\ \nabla f(y_k) }
\label{S6}\tag{S6}
\end{align}
Applying \eqref{e1a} with $(x,y)\mapsto(x_\star,y_k)$ and again  substituting   $y_k=(1+\eta)x_k-\eta x_{k-1}$, we obtain:
\begin{align}
f(x_\star)-f(y_k) &\ge \nabla f(y_k)^\tp (x_\star-y_k)+\frac{m}{2}\norm{x_\star-y_k}^2 \notag\\
&= -\nabla f(y_k)^\tp((1+\eta)(x_k-x_\star)-\eta (x_{k-1}-x_\star))+\frac{m}{2}\norm{(1+\eta)(x_k-x_\star)-\eta(x_{k-1}-x_\star)}^2\notag\\
&= \frac{1}{2}\bmat{x_k-x_\star\\x_{k-1}-x_\star \\ \nabla f(y_k)}^\tp
\left(\bmat{ (1+\eta)^2m & -\eta(1+\eta)m  & -(1+\eta)\\
 -\eta(1+\eta)m & \eta^2m & \eta\\
 -(1+\eta) & \eta & 0} \otimes I_p \right)
 \bmat{x_k-x_\star\\x_{k-1}-x_\star \\ \nabla f(y_k) }
 \label{S7}\tag{S7}
\end{align}
By adding \eqref{S5}--\eqref{S7} with the definitions of $\tilde X_1$, $\tilde X_2$, and $\tilde X_3$ in Lemma~5, we obtain:
\begin{align*}
\bmat{x_k-x_\star\\x_{k-1}-x_\star \\ \nabla f(y_k)}^\tp\left((\tilde{X}_1+\tilde{X}_2) \otimes I_p \right)\bmat{x_k-x_\star\\x_{k-1}-x_\star \\ \nabla f(y_k) } &\le f(x_k)-f(x_{k+1})\\
\bmat{x_k-x_\star\\x_{k-1}-x_\star \\ \nabla f(y_k)}^\tp\left((\tilde{X}_1+\tilde{X}_3) \otimes I_p \right)\bmat{x_k-x_\star\\x_{k-1}-x_\star \\ \nabla f(y_k) } &\le f(x_\star)-f(x_{k+1})
\end{align*}
The rest of the proof follows by substituting above expressions into the weighted sum with $\rho^2$.\qedhere

\section{Proof of Lemma 8}
Since $f$ is $L$-smooth and convex, we can use the same proof technique as in Lemma 3 while setting $m=0$ and $\alpha=\frac{1}{L}$. We can thus obtain the following inequalities that parallel~\eqref{e4}.
\begin{align*}
\frac12 \bmat{y_k-x_k \\ \nabla f(y_k)}^\tp \left( \bmat{ 0 &  1 \\ 1 & -\frac{1}{L} } \otimes I_p\right) \bmat{y_k-x_k \\ \nabla f(y_k) } &\ge f(x_{k+1})-f(x_k)\\
\frac12 \bmat{y_k-x_\star \\ \nabla f(y_k)}^\tp \left( \bmat{ 0 &  1 \\ 1 & -\frac{1}{L} } \otimes I_p\right) \bmat{y_k-x_\star \\ \nabla f(y_k) } &\ge f(x_{k+1})-f(x_\star)
\end{align*}
The conclusion of Lemma 8 follows once we substitute $y_k=(1-\beta_k) x_k+\beta_k x_{k-1}$.\qedhere

\end{document}